\newtheorem{theorem}{Theorem}[section]
\newtheorem{proposition}{Proposition}[section]
\newtheorem{lemma}{Lemma}[section]
\title{Algorithmic aspects of broadcast independence}
\author{S. Bessy$^1$ \and  D. Rautenbach$^2$}
\date{}
\begin{document}
\onehalfspace
%\linenumbers

\maketitle
\vspace{-10mm}
\begin{center}
{\small $^1$ Laboratoire d'Informatique, de Robotique et de
  Micro\'{e}lectronique de Montpellier,\\ Montpellier, France,
  \texttt{stephane.bessy@lirmm.fr}\\[3mm] $^2$ Institute of
  Optimization and Operations Research, Ulm University,\\ Ulm,
  Germany, \texttt{dieter.rautenbach@uni-ulm.de}}
\end{center}

\begin{abstract}
An independent broadcast on a connected graph $G$ is a function
$f:V(G)\to \mathbb{N}_0$ such that, for every vertex $x$ of $G$, the
value $f(x)$ is at most the eccentricity of $x$ in $G$, and $f(x)>0$
implies that $f(y)=0$ for every vertex $y$ of $G$ within distance at
most $f(x)$ from $x$.  The broadcast independence number $\alpha_b(G)$
of $G$ is the largest weight $\sum\limits_{x\in V(G)}f(x)$ of an
independent broadcast $f$ on $G$.

We describe an efficient algorithm that determines the broadcast
independence number of a given tree.  Furthermore, we show NP-hardness
of the broadcast independence number for planar graphs of maximum
degree four, and hardness of approximation for general graphs.  Our
results solve problems posed by Dunbar, Erwin, Haynes, Hedetniemi, and
Hedetniemi (2006), Hedetniemi (2006), and Ahmane, Bouchemakh, Sopena
(2018).
\end{abstract}
{\small 
\begin{tabular}{lp{13cm}}
{\bf Keywords}: broadcast independence\\
\end{tabular}
}

\pagebreak

\section{Introduction}

In his PhD thesis \cite{er} Erwin introduced the notions of broadcast
domination and broadcast independence in graphs.  While broadcast
domination was studied in detail, only little research has been done
on broadcast independence~\cite{ahboso,boze,duerhahehe,he}, and several fundamental
problems related to this notion remained open.  After efficient
algorithms for optimal broadcast domination were developed for
restricted graph classes \cite{blhehoma,dadehe}, Heggernes and
Lokshtanov \cite{helo} showed the beautiful and surprising result that
broadcast domination can be solved optimally in polynomial time for
every graph.  In contrast to that, Dunbar et al.~\cite{duerhahehe} and
Hedetniemi \cite{he} explicitly ask about the complexity of broadcast
independence and about efficient algorithms for trees.  As pointed out
recently by Ahmane et al.~\cite{ahboso}, the complexity of broadcast
independence was unknown even for trees.

In the present paper we describe an efficient algorithm for optimal
broadcast independence in trees.  Furthermore, we show hardness of
approximation for general graphs and NP-completeness for planar graphs
of maximum degree four.  
Before stating our results precisely, 
we collect the necessary definitions.  We
consider finite, simple, and undirected graphs, and use standard
terminology and notation.  Let $\mathbb{N}_0$ be the set of
nonnegative integers, and let $\mathbb{Z}$ be the set of integers.
For a connected graph $G$, a function $f:V(G)\to \mathbb{N}_0$ is an
{\it independent broadcast on $G$} if
\begin{quote}
\begin{enumerate}[(B1)]
\item $f(x)\leq {\rm ecc}_G(x)$ for every vertex $x$ of $G$, where
  ${\rm ecc}_G(x)$ is the eccentricity of $x$ in $G$, and
\item ${\rm dist}_G(x,y)>\max\{ f(x),f(y)\}$ for every two distinct
  vertices $x$ and $y$ of $G$ with $f(x),f(y)>0$, where ${\rm
    dist}_G(x,y)$ is the distance of $x$ and $y$ in $G$.
\end{enumerate}
\end{quote}
The {\it weight} of $f$ is $\sum\limits_{x\in V(G)}f(x)$.  
The {\it broadcast independence number} $\alpha_b(G)$ 
of $G$ is the maximum
weight of an independent broadcast on $G$, and an independent
broadcast on $G$ of weight $\alpha_b(G)$ is {\it optimal}.  Let
$\alpha(G)$ be the usual independence number of $G$
defined as the maximum cardinality of an independent set in $G$,
which is a set of pairwise nonadjacent vertices of $G$.
For an integer
$k$, let $[k]$ be the set of all positive integers at most $k$, and
let $[k]_0=\{ 0\}\cup [k]$.

Note that adding a universal vertex to a non-empty graph does not
change its independence number but reduces its diameter to two, and
that $\alpha_b(G)=\alpha(G)$ for a graph $G$ with diameter two and
$\alpha(G)\geq 3$.  These observations imply that Zuckerman's
\cite{zu} hardness of approximation result for maximum clique
immediately yields the following.

\begin{proposition}\label{proposition1}
For every positive real number $\epsilon$, it is NP-hard to
approximate the broadcast independence number 
of a given connected graph of
order $n$ to within $n^{1-\epsilon}$.
\end{proposition}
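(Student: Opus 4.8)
The plan is to set up a simple approximation-preserving reduction from the maximum independent set problem, whose inapproximability within the factor $n^{1-\epsilon}$ is exactly the complement form of Zuckerman's maximum clique result \cite{zu}: since $\alpha(G)=\omega(\overline{G})$ and complementation is a polynomial-time operation that preserves the order, it is NP-hard, for every $\epsilon'>0$, to approximate $\alpha(G)$ for a given graph $G$ on $n$ vertices to within $n^{1-\epsilon'}$. I would take this statement as the starting point.

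Given such an instance $G$, which I may assume to be non-empty, the reduction is to form the graph $G^+$ obtained by adding a single universal vertex adjacent to every vertex of $G$. By the two observations recorded immediately before the statement, $G^+$ is connected, has diameter two, and satisfies $\alpha(G^+)=\alpha(G)$; consequently, whenever $\alpha(G)\ge 3$ we have the clean identity $\alpha_b(G^+)=\alpha(G^+)=\alpha(G)$. As $G^+$ has order $n+1$, any polynomial-time algorithm approximating $\alpha_b$ on connected graphs of order $N$ to within $N^{1-\epsilon}$ would, when run on $G^+$, return a value approximating $\alpha(G)$ to within $(n+1)^{1-\epsilon}$.

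It then remains only to reconcile the exponents and dispose of the degenerate instances. Given the target $\epsilon$, I would fix any $\epsilon'\in(0,\epsilon)$; since $n^{1-\epsilon'}/(n+1)^{1-\epsilon}\to\infty$, we have $(n+1)^{1-\epsilon}\le n^{1-\epsilon'}$ for all sufficiently large $n$, so the induced approximation for $\alpha(G)$ is within $n^{1-\epsilon'}$, contradicting the assumed NP-hardness. The instances with $\alpha(G)\le 2$, where the identity $\alpha_b=\alpha$ need not hold, together with the finitely many small instances, can be solved exactly in polynomial time and therefore do not interfere with the hardness conclusion. The only point that genuinely needs attention is this last bookkeeping step — matching the factor $n^{1-\epsilon}$ across the additive change of order from $n$ to $n+1$ and confirming that the low-independence cases are easy to discard; everything else follows immediately from the two structural observations already available.
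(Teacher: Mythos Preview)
Your argument is correct and follows exactly the route the paper takes: add a universal vertex to force diameter two while preserving $\alpha$, invoke the observation $\alpha_b=\alpha$ for diameter-two graphs with $\alpha\ge 3$, and appeal to Zuckerman. The paper itself leaves the proof at ``these observations imply,'' so your explicit handling of the exponent shift from $n$ to $n+1$ and of the degenerate instances with $\alpha(G)\le 2$ simply spells out bookkeeping the paper omits.
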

As already stated, we show that computing the broadcast independence
number remains hard even when restricted to instances with bounded
maximum degree.
In fact, we believe that it is hard even when restricted to cubic graphs.
\begin{theorem}\label{theorem1}
For a given connected planar graph $G$ of maximum degree $4$ 
and a given positive integer $k$, 
it is NP-complete to decide whether
$\alpha_b(G)\geq k$.
\end{theorem}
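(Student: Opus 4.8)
The plan is to prove membership in NP and then give a polynomial reduction from a hard independence problem on sparse planar graphs. For membership, observe that every independent broadcast $f$ satisfies $f(x)\le {\rm ecc}_G(x)\le n-1$ for all $x$, so $f$ is encodable by $n$ integers of polynomial size and has weight at most $n(n-1)$; given $f$ one verifies (B1) and (B2) in polynomial time after an all-pairs shortest-path computation and compares the weight with $k$. For hardness I would reduce from \textsc{Independent Set} restricted to planar graphs $H$ of maximum degree three, which is NP-hard (being complementary to planar cubic \textsc{Vertex Cover}). The aim is to build in polynomial time a connected planar graph $G$ of maximum degree four, together with an integer $k$, such that $\alpha_b(G)=\alpha(H)+c$ for an explicitly known additive constant $c$ depending only on the size of $H$; then $\alpha_b(G)\ge k$ if and only if $H$ has an independent set of the prescribed size.

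The crucial design principle is to force every optimal independent broadcast on $G$ to use only the value $1$, so that $\alpha_b(G)=\alpha(G)$, after which the gadgets merely have to realise $\alpha(G)=\alpha(H)+c$ in the standard way. Unlike the diameter-two argument behind Proposition~\ref{proposition1}, here $G$ necessarily has large diameter, so a single large broadcast value is a priori dangerous. I would neutralise this by making $G$ locally two-dimensional: the bulk of $G$ is a piece of the square grid (which already has maximum degree four and is planar), into which I insert constant-size gadgets that encode the vertices and edges of $H$ and carry the independence logic. The key quantitative property to guarantee by construction is that every ball $B_r(x)$ of radius $r\ge 1$ in $G$ contains an independent set of size at least $2r$ whose vertices all lie inside the ball; grid balls contain $\Theta(r^2)$ such vertices, and the $O(1)$-sized gadgets disturb this bound only locally.

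Granting this property, the heart of the proof is an exchange lemma asserting $\alpha_b(G)=\alpha(G)$. Given any independent broadcast $f$, let $x_1,\dots,x_m$ be its positive vertices with values $d_1,\dots,d_m$. Condition (B2) forces ${\rm dist}_G(x_i,x_j)>\max\{d_i,d_j\}$, which makes the balls $B_{\lfloor d_i/2\rfloor}(x_i)$ pairwise disjoint and in fact pairwise at distance at least two. Replacing, for each $i$, the single value $d_i$ at $x_i$ by unit values on an independent set of size at least $d_i$ inside $B_{\lfloor d_i/2\rfloor}(x_i)$ (which exists by the thickness property) yields an independent broadcast of weight at least ${\rm weight}(f)$ whose support is an independent set receiving only $1$'s. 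Hence ${\rm weight}(f)\le\alpha(G)$, and since the all-ones broadcast on a maximum independent set is always feasible and has weight $\alpha(G)$, we obtain $\alpha_b(G)=\alpha(G)$, which completes the reduction.

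The main obstacle I anticipate is engineering the construction so that planarity, the degree bound four, the NP-hardness of $\alpha(G)$, and the ball-thickness property hold \emph{simultaneously}: the grid bulk supplies thickness, planarity and the degree bound but has an easy independence number, so all the hardness must be concentrated in small gadgets that nevertheless do not destroy the local two-dimensional density, and the wires connecting them must stay thick enough to block long-range broadcasts. A secondary technical point is the distance bookkeeping in the exchange lemma, in particular the small and even values $d_i\in\{2,3\}$ where the half-ball $B_{\lfloor d_i/2\rfloor}(x_i)$ is barely large enough; building a little slack into the thickness bound, or treating these small values separately, should settle it.
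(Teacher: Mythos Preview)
Your overall strategy --- reduce from \textsc{Independent Set} on sparse planar graphs, build $G$ so that $\alpha_b(G)=\alpha(G)$, and prove this by replacing each positive broadcast value with a local independent set of unit values --- is exactly the paper's. The constructions differ, however: rather than embedding gadgets in a grid, the paper simply subdivides every edge of the cubic planar graph $H$ twice and hangs one or two pendant copies of a small star on each resulting vertex. This keeps the graph planar of maximum degree four, gives $\alpha(G)=\alpha(H)+c$ for an explicit $c$ by standard subdivision/pendant bookkeeping, and supplies many leaves near every vertex for the exchange step. Your grid proposal would have to do real work to control $\alpha(G)$ exactly, since the grid bulk is bipartite with a trivially computable independence number and all the hardness must be created by the gadgets and their wiring; this is plausible but it is the hard part, and you leave it entirely unspecified.

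There is also a genuine gap in your exchange lemma. The claim that the half-balls $B_{\lfloor d_i/2\rfloor}(x_i)$ are pairwise at distance at least two is false, and not only for the small values you flag: for any even $d_i=d_j=2r$ with ${\rm dist}_G(x_i,x_j)=2r+1$ (allowed by (B2)), the two radius-$r$ balls can contain adjacent vertices, so the union of your local independent sets need not be independent. The paper circumvents this by first shifting every value $\ge 2$ onto a pendant leaf and then building each replacement set $I(x)$ almost entirely out of leaves; since no two leaves are adjacent, it suffices to verify disjointness of the sets $I(x)$ (plus one residual adjacency case), which the paper does via a short case analysis after eliminating the values $2,3$ everywhere and $4,5,6$ at subdivision vertices by direct local replacement. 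You could repair your argument by shrinking the radii to $\lfloor (d_i-1)/2\rfloor$, which does give distance at least two between balls, but then you need an independent set of size $2r+2$ inside each radius-$r$ ball; this fails at $r=0$, so the value $d_i=2$ still needs separate treatment, and you should make that explicit rather than dismiss it as a secondary point.
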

Clearly, $\alpha_b(G)\geq \alpha(G)$ for every connected graph $G$.
In \cite{bera} we show
$\alpha_b(G)\leq 4\alpha(G)$ for every connected graph $G$,
which yields efficient constant factor approximation algorithms 
for the broadcast independence number
on every class of connected graphs 
on which the independence number 
can efficiently be approximated within a constant factor;
in particular, on graphs of bounded maximum degree.

In the next section, we prove Theorem~\ref{theorem1}, and, 
in Section~\ref{section:polyAlgoTree}, 
we present the polynomial time algorithm to
compute the broadcast independence number of a given tree, 
more precisely, we prove the following.

\begin{theorem}\label{theorem2}
The broadcast independence number $\alpha_b(T)$ of a given tree $T$ of
order $n$ can be determined in $O(n^9)$ time.
\end{theorem}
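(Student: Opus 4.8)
The plan is to root $T$ at an arbitrary vertex $r$ and run a bottom-up dynamic program over the subtrees $T_v$, the key point being that the broadcasters inside $T_v$ interact with those outside only through $v$: for $x\in V(T_v)$ and $y\notin V(T_v)$ we have $\mathrm{dist}_T(x,y)=\mathrm{dist}_T(x,v)+\mathrm{dist}_T(v,y)$. First I would isolate a small sufficient statistic for a subtree. Given the restriction of an independent broadcast $f$ to $T_v$, define $A=\max\{f(x)-\mathrm{dist}_T(x,v):x\in V(T_v),\,f(x)>0\}$, the largest ``reach'' of an inside broadcaster as seen from $v$ (with $A=-\infty$ if there is none), and $B=\min\{\mathrm{dist}_T(x,v):x\in V(T_v),\,f(x)>0\}$, the distance from $v$ to the nearest inside broadcaster ($B=+\infty$ if there is none). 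I claim that condition (B2) between an inside broadcaster and an outside broadcaster $y$ at distance $d=\mathrm{dist}_T(v,y)$ holds for all such pairs if and only if every outside $y$ satisfies $d>A$ and $B+d>f(y)$. This follows by splitting each inequality $\mathrm{dist}_T(x,v)+\mathrm{dist}_T(v,y)>\max\{f(x),f(y)\}$ into its two halves and aggregating: the $f(x)$-half over all inside $x$ gives the single condition $d>A$, and the $f(y)$-half gives $B+d>f(y)$. Thus the pair $(A,B)$ captures everything the outside needs to know about $T_v$.

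Accordingly, the state at a vertex $v$ is the pair $(A,B)$ measured from $v$, and the stored value is the maximum weight of an independent broadcast on $T_v$ that is internally feasible and realizes that pair. At a leaf the only choices are whether to broadcast and with which radius $s\in\{1,\dots,\mathrm{ecc}_T(v)\}$, giving states $(s,0)$ of weight $s$ and the empty state $(-\infty,+\infty)$ of weight $0$. At an internal vertex $v$ with children $c_1,\dots,c_k$ I would process the children one at a time, keeping a running table indexed by the aggregate $(A_{\mathrm{agg}},B_{\mathrm{agg}})$ of the broadcasters already placed, measured from $v$. Merging the table of $c_i$ — each of its states shifted from $c_i$ to $v$ by $A\mapsto A-1$, $B\mapsto B+1$ — with the running table is permitted for a pair of states exactly when $B_{\mathrm{agg}}>A_i$ and $B_i>A_{\mathrm{agg}}$, which is precisely the factorization above specialized to two sibling subtrees, and it produces the state $(\max\{A_{\mathrm{agg}},A_i\},\min\{B_{\mathrm{agg}},B_i\})$ with summed weight. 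Finally I incorporate the broadcast of $v$ itself: either $f(v)=0$, or $f(v)=s$ for some $s\in\{1,\dots,\mathrm{ecc}_T(v)\}$, the latter treated as merging one virtual child with the single state $(s,0)$ of weight $s$. Because the aggregation is by $\max$ and $\min$ the merges are commutative, so their order is irrelevant, and the virtual-child conditions $B_{\mathrm{agg}}>s$ and $0>A_{\mathrm{agg}}$ correctly enforce (B2) between $v$ and every other broadcaster.

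Correctness then reduces to the structural claim about $(A,B)$ together with an induction on the tree: every internally feasible broadcast on $T_v$ is recorded by some state, and every state corresponds to an internally feasible broadcast of the stored weight; the optimum is read off from the table at the root. The main obstacle, and the step I would be most careful about, is proving that this two-number summary is genuinely lossless — that global feasibility never depends on finer information about $T_v$ than $(A,B)$. The verification above settles a single outside broadcaster, and I would extend it to show that the sequential merge keeps $(\max A,\min B)$ as an \emph{exact} sufficient statistic for all subsequent interactions, so that no infeasible configuration is ever accepted and no optimal one discarded. The eccentricity constraint (B1) enters only locally, as the cap $s\le\mathrm{ecc}_T(v)$ whenever a vertex broadcasts, and all eccentricities in $T$ are precomputed. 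One should also note that every radius for $v$ must be tried, not only the largest: a smaller radius at $v$ admits nearer broadcasters and may raise the total weight, so the trade-off is genuine.

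For the running time, $A$ and $B$ each range over $O(n)$ values, so each vertex carries $O(n^2)$ states; merging a child pairs $O(n^2)$ running states with $O(n^2)$ child states in $O(n^4)$ time, and summing $\deg_T(v)\cdot n^4$ over all $v$ (total degree $O(n)$) gives $O(n^5)$, with the own-broadcast step contributing only $O(n^4)$. A direct analysis therefore already yields a fixed polynomial comfortably within the stated $O(n^9)$ bound, so $\alpha_b(T)$ is computed in $O(n^9)$ time as claimed; refining the bookkeeping of the merge step is what I would do to pin down the exact exponent.
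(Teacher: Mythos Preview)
Your approach is correct and follows the same high-level strategy as the paper: root the tree, process children one at a time via partial subtrees, and carry a bounded-size summary encoding how inside broadcasters constrain outside vertices. The paper packages this summary as a pair $(p_{\rm out},q_{\rm out})$ describing a piecewise-linear upper-bound function $g_{(p_{\rm out},q_{\rm out})}$; your pair $(A,B)$ is the same information, since for a set of inside broadcasters the induced bound on an outside vertex at distance $d$ from $v$ is exactly ``$0$ if $d\le A$, else $d+B-1$'', i.e.\ $g_{(A,A+B)}$. The substantive difference is that the paper additionally indexes its table by an \emph{incoming} constraint $(p_{\rm in},q_{\rm in})$ representing what the outside will impose on the inside, whereas you observe that this is unnecessary: the cross-compatibility of two parts is fully captured by the symmetric test $B_{\rm agg}>A_i$ and $B_i>A_{\rm agg}$ on their $(A,B)$ summaries, so the incoming constraint can be checked at merge time rather than stored. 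This is a genuine simplification --- your state space is $O(n^2)$ per partial subtree versus the paper's $O(n^4)$, and your merge costs $O(n^4)$ per child, giving $O(n^5)$ overall rather than $O(n^9)$. The paper's formulation buys a somewhat more uniform treatment via the $g_{(p,q)}$ calculus (Lemmas~\ref{lemma1} and~\ref{lemma2}); yours buys a sharper bound and a cleaner proof of sufficiency, since the factorization $d>A$, $B+d>f(y)$ is immediate from splitting $\mathrm{dist}(x,y)=\mathrm{dist}(x,v)+\mathrm{dist}(v,y)$.
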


\section{NP-completeness of broadcast independence on planar graphs
with maximum degree $5$}
  \label{section:hardnessMaxDegree5}

In this section, we prove Theorem~\ref{theorem1}.

Since an independent broadcast can be encoded using polynomially many
bits, and (B1) and (B2) can be checked in polynomial time, the
considered decision problem is in NP.
We show its NP-completeness by reducing to it the NP-complete problem
\cite{gajo} {\sc Independent Set} restricted to connected planar cubic
graphs.  
Therefore, let $(H,k)$ be an instance of {\sc Independent Set}, where $H$ is a connected planar cubic graph.
Recall that {\sc Independent Set} is the problem to decide whether
$\alpha(H)\geq k$.  In order to complete the proof, we describe a
polynomial time construction of a planar graph $G$ of maximum degree
$4$ such that 
$$\alpha_b(G)=\alpha(H)+\frac{45}{2}n(H),$$ 
where $n(H)$ is the order of $H$.  
First, let the graph $H'$ arise 
by subdividing each edge of $H$ exactly twice.
It is well known and easy to see that 
$\alpha(H')=\alpha(H)+m(H)=\alpha(H)+\frac{3}{2}n(H)$,
where $m(H)$ is the number of edges of $H$.
Now, the graph $G$ arises from $H'$ by
\begin{itemize}
\item adding, for every vertex $x$ in $V(H)$, 
one copy $K(x)$ of the star
$K_{1,4}$ of order $4$ and connecting its center with $x$, and
\item adding, for every vertex $x$ in $V(H')\setminus V(H)$, 
two disjoint copies $K_1(x)$ and $K_2(x)$ of the star
$K_{1,4}$ of order $4$ and connecting their two centers with $x$.
\end{itemize}
See Figure \ref{fig1} for an illustration.

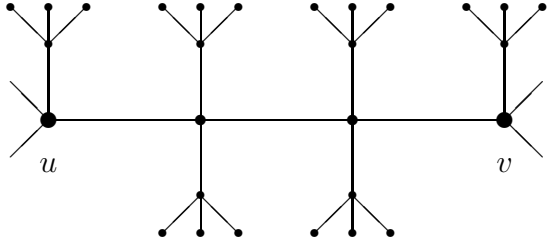
\begin{figure}[H]
\begin{center}
%TeXCAD Picture [5.pic]. Options:
%\grade{\on}
%\emlines{\off}
%\epic{\off}
%\beziermacro{\on}
%\reduce{\on}
%\snapping{\on}
%\pvinsert{% Your \input, \def, etc. here}
%\quality{8.000}
%\graddiff{0.005}
%\snapasp{1}
%\zoom{11.3137}
\unitlength 1mm % = 2.845pt
\linethickness{0.4pt}
\ifx\plotpoint\undefined\newsavebox{\plotpoint}\fi % GNUPLOT compatibility
\begin{picture}(70.5,31)(0,0)
\put(5,15){\circle*{2}}
\put(65,15){\circle*{2}}
\put(5,15){\line(1,0){60}}
\put(25,15){\circle*{1.5}}
\put(45,15){\circle*{1.5}}
\put(25,5){\circle*{1}}
\put(45,5){\circle*{1}}
\put(20,0){\circle*{1}}
\put(40,0){\circle*{1}}
\put(30,0){\circle*{1}}
\put(50,0){\circle*{1}}
\put(25,25){\circle*{1}}
\put(45,25){\circle*{1}}
\put(65,25){\circle*{1}}
\put(5,25){\circle*{1}}
\put(30,30){\circle*{1}}
\put(50,30){\circle*{1}}
\put(70,30){\circle*{1}}
\put(10,30){\circle*{1}}
\put(20,30){\circle*{1}}
\put(40,30){\circle*{1}}
\put(60,30){\circle*{1}}
\put(0,30){\circle*{1}}
\put(20,30){\line(1,-1){5}}
\put(40,30){\line(1,-1){5}}
\put(60,30){\line(1,-1){5}}
\put(0,30){\line(1,-1){5}}
\put(25,25){\line(1,1){5}}
\put(45,25){\line(1,1){5}}
\put(65,25){\line(1,1){5}}
\put(5,25){\line(1,1){5}}
\put(25,25){\line(0,-1){20}}
\put(45,25){\line(0,-1){20}}
\put(30,0){\line(-1,1){5}}
\put(50,0){\line(-1,1){5}}
\put(25,5){\line(-1,-1){5}}
\put(45,5){\line(-1,-1){5}}
\put(5,25){\line(0,-1){10}}
\put(65,25){\line(0,-1){10}}
\put(5,9){\makebox(0,0)[cc]{$u$}}
\put(65,9){\makebox(0,0)[cc]{$v$}}
\put(70,20){\line(-1,-1){5}}
\put(65,15){\line(1,-1){5}}
\put(0,20){\line(1,-1){5}}
\put(5,15){\line(-1,-1){5}}
\put(5,30){\circle*{1}}
\put(25,30){\circle*{1}}
\put(45,30){\circle*{1}}
\put(65,30){\circle*{1}}
\put(5,30){\line(0,-1){5}}
\put(25,30){\line(0,-1){5}}
\put(45,30){\line(0,-1){5}}
\put(65,30){\line(0,-1){5}}
\put(25,0){\circle*{1}}
\put(45,0){\circle*{1}}
\put(25,5){\line(0,-1){5}}
\put(45,5){\line(0,-1){5}}
\end{picture}
\end{center}
\caption{An edge $uv$ of $H$ after two subdivisions and the attachment of the disjoint stars. 
The vertices of $H$ are shown largest and
the vertices of the attached stars are shown smallest.}\label{fig1}
\end{figure}
Note that $G$ is connected and planar, 
has order $32n(H)$ and maximum degree $4$, 
and contains $21n(H)$ endvertices.  
Since some maximum independent set in $G$ 
contains all $21n(H)$ endvertices of $G$, and 
removing the closed neighborhoods of all these endvertices yields $H'$,
we have 
$$\alpha(G)=\alpha(H')+21n(H)=\alpha(H)+\frac{3}{2}n(H)+21n(H)
=\alpha(H)+\frac{45}{2}n(H),$$
that is, it remains
to show that $\alpha_b(G)=\alpha(G)$.

Let $f:V(G)\to \mathbb{N}_0$ be an optimal independent
broadcast on $G$.  
For every vertex $x$ of $H'$, 
let $L(x)$ be the set of endvertices of $G$
that are at distance $2$ from $x$,
and let $L=\bigcup\limits_{x\in V(H')}L(x)$.
Note that $|L(x)|=3$ if $x\in V(H)$, and 
that $|L(x)|=6$ if $x\in V(H')\setminus V(H)$.

If there is some vertex $x$ in $V(G)\setminus L$ with $f(x)=k$
for some $k\geq 2$,
and $y$ is a vertex in $L$ 
that is closest to $x$,
then changing the value of $f(y)$ to $k$,
and the value of $f(x)$ to $0$
yields an independent broadcast on $G$
of the same weight as $f$.  
Applying this operation iteratively, 
we may assume that 
$$\mbox{$f(x)\geq 2$ only if $x\in L$}.$$
If there is some vertex $y$ in $V(H')$
such that $f(x)\in \{ 2,3\}$ for some vertex $x$ in $L(y)$,
then changing the value of $f$ 
for the at least three vertices in $L(y)$ to $1$
yields an independent broadcast on $G$
whose weight is at least the weight of $f$.
Applying this operation iteratively, 
we may assume that 
$$\mbox{$f(x)\not\in \{ 2,3\}$ for every $x\in V(G)$}.$$
If there is some vertex $y$ in $V(H')\setminus V(H)$
such that $f(x)\in \{ 4,5,6\}$ for some vertex $x$ in $L(y)$,
then changing the value of $f$ 
for the six vertices in $L(y)$ to $1$
yields an independent broadcast on $G$
whose weight is at least the weight of $f$.
Applying this operation iteratively, 
we may assume that 
\begin{eqnarray}\label{e4}
\mbox{$f(x)\not\in \{ 4,5,6\}$ 
for every $y\in V(H')\setminus V(H)$ and every $x\in L(y)$}.
\end{eqnarray}
Let $X=\{ x\in V(G):f(x)>0\}$.
To every vertex $x$ in $X$, 
we assign an independent set $I(x)$ as follows:
\begin{itemize}
\item If $f(x)=1$, then let $I(x)=\{ x\}$.
\item If $f(x)=4$, then, by (\ref{e4}), 
there is some vertex $y$ of $H$
such that $x\in L(y)$. Let $I(x)=\{ y\}\cup L(y)$.
\item If $f(x)=5$, then, by (\ref{e4}),  
there is some vertex $y$ of $H$
such that $x\in L(y)$.
Let $y'$ be some neighbor of $y$ in $H'$,
and let $I(x)=L(y)\cup L(y')$.
\item If $f(x)\geq 6$, 
then there is some vertex $y$ of $H'$ such that $x\in L(y)$.
By (B1), there is a shortest path $P:x_0\ldots x_{\left\lfloor\frac{f(x)}{2}\right\rfloor}$ in $G$ such that $x_0=x$ and $x_2=y$.
Let 
$$I(x)=\bigcup\limits_{i=2}^{\left\lfloor\frac{f(x)}{2}\right\rfloor}L(x_i).$$
\end{itemize}
By construction, $I(x)$ is an independent set for every $x$ in $X$.
Furthermore, if $f(x)\leq 5$ for some $x$ in $X$,
then $|I(x)|\geq f(x)$ is easily verified.
Now, if $f(x)\geq 6$ for some $x$ in $X$, and $P$ is as above, 
then 
$\left\lfloor\frac{f(x)}{2}\right\rfloor\geq 3$,
at least one of the two sets $L(x_2)$ and $L(x_3)$ 
contains six vertices,
and, hence,
$|I(x)|\geq 3\left(\left\lfloor\frac{f(x)}{2}\right\rfloor-1\right)+3
\geq f(x)$.
Altogether,
$I(x)$ is an independent set of order at least $f(x)$ 
for every $x$ in $X$.

Suppose, for a contradiction, 
that there are two distinct vertices $x$ and $x'$ in $X$
such that $I(x)$ and $I(x')$ intersect.
If $f(x)=1$, then, necessarily, $f(x')=4$,
and ${\rm dist}_G(x,x')\leq 2$,
contradicting (B2).
If $f(x),f(x')\in \{ 4,5\}$, then, by (\ref{e4}), 
${\rm dist}_G(x,x')=2$,
and if $f(x)\in \{ 4,5\}$ and $f(x')\geq 6$, then 
${\rm dist}_G(x,x')\leq \left\lfloor\frac{f(x')}{2}\right\rfloor+2<f(x')$,
contradicting (B2).
Finally, if $f(x),f(x')\geq 6$, then 
${\rm dist}_G(x,x')
\leq 
\left\lfloor\frac{f(x)}{2}\right\rfloor
+
\left\lfloor\frac{f(x')}{2}\right\rfloor
\leq \max\{ f(x),f(x')\}$,
again contradicting (B2).
Hence, the sets $I(x)$ for $x$ in $X$ are all disjoint.

Suppose, for a contradiction,
that there are two distinct vertices $x$ and $x'$ in $X$
such that $G$ contains an edge between $I(x)$ and $I(x')$.
Since no two endvertices in $G$ are adjacent,
and $I(x)\subseteq L$ for $x$ in $X$ with $f(x)\geq 5$,
this implies $f(x),f(x')\in \{ 1,4\}$,
which easily implies a contradiction to (B2).
Altogether, it follows that 
$I=\bigcup\limits_{x\in X}I(x)$
is an independent set of order at least $\alpha_b(G)$,
which implies $\alpha(G)\geq \alpha_b(G)$.
Since $\alpha_b(G')\geq \alpha(G')$ 
holds for every graph $G'$, 
this complete the proof.

\section{A polynomial time algorithm for trees}
\label{section:polyAlgoTree}

Throughout this section, let $T$ be a fixed tree of order $n$.

Before we explain the details of our approach, which is based on
dynamic programming, we collect some key observations.  We will
consider certain subtrees $T(u,i)$ of $T$ that contain a vertex $u$
such that all edges of $T$ between $V(T(u,i))$ and $V(T)\setminus
V(T(u,i))$ are incident with $u$,
that is, $V(T(u,i))$ contains $u$ 
and some connected components of $T-u$. 
For every independent
broadcast $f$ on $T$, the restriction of $f$ to $V(T(u,i))$ clearly
satisfies
\begin{quote}
\begin{enumerate}[(C1)]
\item $f(x)\leq {\rm ecc}_T(x)$ for every vertex $x$ of $T(u,i)$.
\item ${\rm dist}_T(x,y)>\max\{ f(x),f(y)\}$ for every two distinct vertices $x$ and $y$ of $T(u,i)$ with $f(x),f(y)>0$.
\end{enumerate}
\end{quote}
Furthermore, if $y$ is a vertex in $V(T)\setminus V(T(u,i))$ with
$f(y)>0$, then $y$ imposes upper bounds on the possible values of $f$
inside $V(T(u,i))$.  More precisely, $f(x)$ must be $0$ for all
vertices $x$ of $T(u,i)$ with ${\rm dist}_T(x,y)\leq f(y)$, and $f(x)$
can be at most ${\rm dist}_T(x,y)-1$ for all vertices $x$ of $T(u,i)$
with ${\rm dist}_T(x,y)>f(y)$. So, in short we have $f(x)=0$ if ${\rm
  dist}_T(x,y)\leq f(y)$ and $f(x)\le {\rm dist}_T(x,y) -1$ otherwise.
Expressed as a function of ${\rm dist}_T(u,x)$ instead of ${\rm
  dist}_T(x,y)$, and using the equality ${\rm dist}_T(u,x)={\rm
  dist}_T(x,y)-{\rm dist}_T(u,y)$, we obtain the condition
$$f(x)\leq g_{(p,q)}({\rm dist}_T(u,x))\mbox{ for every vertex $x$ of $T(u,i)$,}$$
where the function $g_{(p,q)}(d):\mathbb{Z}\to \mathbb{N}_0$
is such that
\begin{eqnarray}
g_{(p,q)}(d)&=&
\begin{cases}
0 & \mbox{, if $d\leq p$, and}\\
d-p+q-1 & \mbox{, if $d\geq p+1$},
\end{cases}\label{e0}\\[3mm]
p&=&f(y)-{\rm dist}_T(u,y)\mbox{, and}\nonumber\\
q&=&f(y).\nonumber
\end{eqnarray}
Note that $q$ is positive, $p$ may be negative, and that $|p|$ and $q$
are both at most the diameter of $T$, which is at most $n$.

One key observation is the following simple lemma.

\begin{lemma}\label{lemma1}
If $(p_1,q_1),\ldots,(p_k,q_k)$ are pairs of integers such that
$-n\leq p_i\leq n$ and $1\leq q_i\leq n$ for every $i$ in $[k]$, then
there is a pair $(p_{\rm in},q_{\rm in})$ of integers such that
$-n\leq p_{\rm in}\leq n$, $1\leq q_{\rm in}\leq n$, and
$$g_{(p_{\rm in},q_{\rm in})}(d)=\min\Big\{
g_{(p_1,q_1)}(d),\ldots,g_{(p_k,q_k)}(d)\Big\}\mbox{ for every
  nonnegative integer $d$}.$$
\end{lemma}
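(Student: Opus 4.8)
The plan is to exploit the fact that each $g_{(p_i,q_i)}$ is a very rigid object: by (\ref{e0}) it vanishes for $d\le p_i$ and, once $d$ exceeds $p_i$, it equals $d-p_i+q_i-1$, a function of unit slope taking the value $q_i$ at $d=p_i+1$. In particular each $g_{(p_i,q_i)}$ is nondecreasing, so their pointwise minimum $m(d):=\min_i g_{(p_i,q_i)}(d)$ is nondecreasing as well, and the whole task reduces to showing that $m$ again has the ``flat part, then unit-slope ramp'' profile, reading off the corresponding parameters $(p_{\rm in},q_{\rm in})$, and checking that they lie in the required ranges.

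First I would set $p_{\rm in}=\max_i p_i$; since every $p_i$ lies in $[-n,n]$, so does $p_{\rm in}$. This value governs the flat part of $m$: for $d\le p_{\rm in}$ the index $i^*$ attaining $p_{i^*}=p_{\rm in}$ has $g_{(p_{i^*},q_{i^*})}(d)=0$, whence $m(d)=0$; and for $d\ge p_{\rm in}+1$ every $g_{(p_i,q_i)}$ is in its linear regime, so that $m(d)=\min_i(d-p_i+q_i-1)=d-p_{\rm in}+q_{\rm in}-1$ once we define
$$q_{\rm in}=\min_i\big(p_{\rm in}-p_i+q_i\big)=p_{\rm in}+\min_i(q_i-p_i).$$
With these two parameters, comparing $m$ with $g_{(p_{\rm in},q_{\rm in})}$ separately on $d\le p_{\rm in}$ and on $d\ge p_{\rm in}+1$ shows that they coincide for all (in particular all nonnegative) integers $d$.

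The only point that needs genuine care is the bound $1\le q_{\rm in}\le n$, which is exactly where the specific definition of $q_{\rm in}$ pays off. The lower bound is immediate: since $p_{\rm in}\ge p_i$ for every $i$, each term $p_{\rm in}-p_i+q_i$ is at least $q_i\ge 1$, so $q_{\rm in}\ge 1$. For the upper bound the trick is to evaluate at the maximizing index rather than to estimate termwise: taking $i=i^*$ with $p_{i^*}=p_{\rm in}$ makes the term equal to $p_{\rm in}-p_{i^*}+q_{i^*}=q_{i^*}\le n$, and since $q_{\rm in}$ is the minimum over all indices, $q_{\rm in}\le q_{i^*}\le n$. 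Equivalently, $q_{\rm in}=m(p_{\rm in}+1)$, which cannot exceed the value $q_{i^*}$ that the function with the largest threshold already takes at $d=p_{\rm in}+1$. This is the step I would expect a careless argument to get wrong, since a termwise bound on $p_{\rm in}+\min_i(q_i-p_i)$ would only yield $q_{\rm in}\le 3n$; the sharp bound relies on the fact that $p_{\rm in}$ and $\min_i(q_i-p_i)$ cannot both be large simultaneously, and routing the estimate through $i^*$ is what captures this correlation.
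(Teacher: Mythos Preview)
Your proof is correct and follows essentially the same approach as the paper: you take $p_{\rm in}=\max_i p_i$ and $q_{\rm in}=\min_i(p_{\rm in}-p_i+q_i)$, which is precisely the paper's choice $q_{\rm in}=\min_i g_{(p_i,q_i)}(p_{\rm in}+1)$ written out explicitly, and your verification of the bounds on $q_{\rm in}$ via the index $i^*$ attaining the maximum $p_{i^*}=p_{\rm in}$ is exactly the argument the paper gives.
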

\begin{proof}
The statement follows for $p_{\rm in}=\max\{ p_1,\ldots,p_k\}$ and
$$q_{\rm in}=\min\Big\{ g_{(p_1,q_1)}(p_{\rm
  in}+1),\ldots,g_{(p_k,q_k)}(p_{\rm in}+1)\Big\}.$$ Clearly, we have
$-n\leq p_{\rm in}\leq n$ and $\min\{
g_{(p_1,q_1)}(d),\ldots,g_{(p_k,q_k)}(d)\}=0=g_{(p_{\rm in},q_{\rm
    in})}(d)$ for $d\le p_{\rm in}$.\\ Since $g_{(p_i,q_i)}(p_{\rm
  in}+1)\geq g_{(p_i,q_i)}(p_i+1)=q_i\geq 1$ for every $i$ in $[k]$,
we have $q_{\rm in}\geq 1$.  Furthermore, if $i$ in $[k]$ is such that
$p_{\rm in}=p_i$, then
$$q_{\rm in}\leq g_{(p_i,q_i)}(p_{\rm
  in}+1)=g_{(p_i,q_i)}(p_i+1)=(p_i+1)-p_i+q_i-1=q_i,$$ which implies
$q_{\rm in}\leq q_i\leq n$. 
Finally, notice that for every $p$, $q$, $p'$, and $d$ 
such that $d\ge p+1$ and $p'\ge p$, 
we have
$g_{(p,q)}(d)=d-(p'+1)+g_{(p,q)}(p'+1)$. 
So, 
for every $d\ge p_{\rm in}+1$, 
we have 
\begin{eqnarray*}
\min\{ g_{(p_1,q_1)}(d),\ldots,g_{(p_k,q_k)}(d)\}&=&
d-(p_{\rm in}+1)+\min\{ g_{(p_1,q_1)}(p_{\rm
  in}+1),\ldots,g_{(p_k,q_k)}(p_{\rm in}+1)\}\\
  &=& d-(p_{\rm in}+1)+q_{\rm in}\\
  &=& g_{(p_{\rm in},q_{\rm in})}(d).
\end{eqnarray*}
\end{proof}
Lemma \ref{lemma1} implies that the upper bounds on the possible
values of $f$ inside $V(T(u,i))$ that are imposed by positive values
of $f$ in $V(T)\setminus V(T(u,i))$ can be encoded by just two
integers $p_{\rm in}$ and $q_{\rm in}$ with $-n\leq p_{\rm in}\leq n$
and $1\leq q_{\rm in}\leq n$.  Symmetrically, the upper bounds on the
possible values of $f$ inside $V(T)\setminus V(T(u,i))$ that are
imposed by positive values of $f$ in $V(T(u,i))$, again expressed as a
function of the distance from $u$ in $T$, can be encoded by just two
integers $p_{\rm out}$ and $q_{\rm out}$ with $-n\leq p_{\rm out}\leq
n$ and $1\leq q_{\rm out}\leq n$.

For all $O(n^4)$ possible choices for $((p_{\rm in},q_{\rm
  in}),(p_{\rm out},q_{\rm out}))$, the algorithm determines the
maximum contribution $\sum\limits_{x\in V(T(u,i))}f(x)$ to the weight
of an independent broadcast $f$ on $T$ such that the following conditions hold:
\begin{quote}
\begin{enumerate}
\item[(C3)] $f(x)\leq g_{(p_{\rm in},q_{\rm in})}({\rm dist}_T(u,x))$
for every vertex $x$ of $T(u,i)$.
\item[(C4)] If $f(x)>0$ for some vertex $x$ of $T(u,i)$, then
$$g_{(p_{\rm out},q_{\rm out})}(d)\leq 
g_{(f(x),f(x))}(d+{\rm dist}_T(u,x))\mbox{ for every positive integer $d$.}$$
\end{enumerate}
\end{quote}
Intuitively speaking,
(C3) means that every value of $f$ 
assigned to some vertex of $T(u,i)$
respects the upper bound encoded by $g_{(p_{\rm in},q_{\rm in})}$,
and 
(C4) means that every positive value of $f$ 
assigned to some vertex of $T(u,i)$
imposes an upper bound on the values of $f$ outside
$V(T(u,i))$ that is at least the upper bound
encoded by $g_{(p_{\rm out},q_{\rm out})}$.
The next lemma shows how to check condition (C4) in constant time
for an individual vertex $x$.

\begin{lemma}\label{lemma2}
If $p$, $q$, $f$, and ${\rm dist}$ are integers 
such that $-n\leq p\leq n$ and $q,f,{\rm dist}\in [n]$,
then 
\begin{eqnarray}\label{e1}
g_{(p,q)}(d)\leq 
g_{(f,f)}(d+{\rm dist})\mbox{ for every positive integer $d$}
\end{eqnarray}
if and only if
$\max\Big\{ f-\max\big\{ p,0\big\},q-p\Big\}\leq {\rm dist}$.
\end{lemma}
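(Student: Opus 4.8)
The plan is to prove the biconditional by a direct case analysis on the positive integer $d$, using the explicit piecewise form of $g$ from (\ref{e0}). First I would record the shape of the right-hand side: since $g_{(f,f)}(e)$ equals $0$ for $e\le f$ and $e-1$ for $e\ge f+1$, the quantity $g_{(f,f)}(d+{\rm dist})$ is $0$ when $d+{\rm dist}\le f$ and equals $d+{\rm dist}-1$ when $d+{\rm dist}\ge f+1$. The left-hand side $g_{(p,q)}(d)$ is $0$ for $d\le p$ and $d-p+q-1$ for $d\ge p+1$. Hence (\ref{e1}) is trivially satisfied whenever $d\le p$, and only the range $d\ge\max\{p+1,1\}$ needs to be examined.

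For $d$ in that range I would split according to whether $d+{\rm dist}\le f$ or $d+{\rm dist}\ge f+1$. In the second regime the inequality (\ref{e1}) reads $d-p+q-1\le d+{\rm dist}-1$, i.e. $q-p\le{\rm dist}$, independently of $d$; since $d$ can be taken arbitrarily large this regime is always nonempty, so $q-p\le{\rm dist}$ is necessary and, within this regime, also sufficient. In the first regime the right-hand side is $0$ while the left-hand side equals $d-p+q-1\ge q\ge 1>0$, so (\ref{e1}) fails for every such $d$. Consequently (\ref{e1}) can hold for all $d$ only if this first regime is empty, that is, only if there is no integer $d$ with $\max\{p+1,1\}\le d\le f-{\rm dist}$.

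The key computation is to translate this emptiness into a clean inequality. The set $\{d:\max\{p+1,1\}\le d\le f-{\rm dist}\}$ is empty precisely when $f-{\rm dist}<\max\{p+1,1\}$, and since $\max\{p+1,1\}-1=\max\{p,0\}$, this is equivalent to $f-\max\{p,0\}\le{\rm dist}$. Combining the two regimes, (\ref{e1}) holds for every positive integer $d$ if and only if both $q-p\le{\rm dist}$ and $f-\max\{p,0\}\le{\rm dist}$, which is exactly the asserted condition $\max\{f-\max\{p,0\},q-p\}\le{\rm dist}$.

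The only delicate point, and the step I would treat most carefully, is the interaction of the lower cutoff $d\ge p+1$ coming from the left-hand side with the standing requirement that $d$ be a \emph{positive} integer: it is the combined lower bound $\max\{p+1,1\}$ that produces the term $\max\{p,0\}$ rather than a bare $p$, and getting this right is what makes the ``$f$''-part of the final bound correct in the case $p\le 0$. Everything else is a routine substitution into the piecewise definition (\ref{e0}).
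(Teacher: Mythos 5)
Your proof is correct and amounts to essentially the same elementary computation as the paper's: both arguments reduce (\ref{e1}) to the two constituent inequalities $f-\max\{p,0\}\leq {\rm dist}$ (ruling out the window where the right-hand side vanishes but the left-hand side is positive) and $q-p\leq {\rm dist}$ (the comparison in the common linear regime). The only difference is organizational --- the paper splits into the cases $p\leq 0$ and $p\geq 1$ and checks (\ref{e1}) at a single critical value of $d$ in each case, whereas you partition the range of $d$ uniformly via the cutoff $\max\{p+1,1\}$ --- and you correctly handle the one delicate point, namely that the integrality of $d$ turns the emptiness condition $f-{\rm dist}<\max\{p+1,1\}$ into $f-\max\{p,0\}\leq {\rm dist}$.
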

\begin{proof}
We consider two cases.

First, let $p\leq 0$.
In this case, 
$g_{(p,q)}(d)$ is positive for every positive $d$,
and (\ref{e1}) holds if and only if 
\begin{enumerate}[(i)]
\item $g_{(f,f)}(1+{\rm dist})$ is positive, and
\item $g_{(f,f)}(1+{\rm dist})$ is at least 
$g_{(p,q)}(1)$. 
\end{enumerate}
(i) is equivalent to $1+{\rm dist}\geq f+1$,
and, by (i),
(ii) is equivalent to 
$(1+{\rm dist})-f+f-1\geq 1-p+q-1,$
that is, (i) and (ii) together are equivalent to
$\max\{ f,q-p\}\leq {\rm dist}.$

Next, let $p\geq 1$.
In this case, $g_{(p,q)}(d)$ is positive 
if and only if $d\geq p+1$,
and (\ref{e1}) holds if and only if 
\begin{enumerate}[(i)]
\item $g_{(f,f)}(p+1+{\rm dist})$ is positive, and
\item $g_{(f,f)}(p+1+{\rm dist})$ is at least 
$g_{(p,q)}(p+1)$. 
\end{enumerate}
(i) is equivalent to $p+1+{\rm dist}\geq f+1$,
and, by (i),
(ii) is equivalent to 
$p+1+{\rm dist}-f+f-1
\geq p+1-p+q-1=q,$
that is, (i) and (ii) together are equivalent to
$\max\{ f-p,q-p\}\leq {\rm dist}.$
The statement of the lemma summarizes both cases.
\end{proof}
Now, we explain which subtrees $T(u,i)$ we consider.  We select a
vertex $r$ of $T$, and consider $T$ as rooted in $r$.  For every
vertex $u$ of $T$ that is not a leaf, we fix an arbitrary linear order
on the set of children of $u$.  If $v_1,\ldots,v_k$ are the children
of $u$ in their linear order, then for every $i\in [k]_0$,
let $T(u,i)$ be the subtree of $T$ that contains $u,v_1,\ldots,v_i$ as
well as all descendants of the vertices $v_1,\ldots,v_i$ in $T$.  Note
that $T(u,0)$ contains only $u$, and that $u$ is the only vertex of
$T(u,i)$ that may have neighbors in $T$ outside of $V(T(u,i))$.
Altogether, there are at most 
$(d_T(r)+1) + \sum\limits_{x\in V(T)\setminus \{ r\}}d_T(x)=O(n)$ many choices for $(u,i)$.
For brevity, let $V(u,i)$ denote $V(T(u,i))$.

Let $u$ be a vertex with $k$ children, and let $i\in [k]_0$.  Let
$p_{\rm in}$, $p_{\rm out}$, $q_{\rm in}$, and $q_{\rm out}$ be
integers with $-n\leq p_{\rm in}, p_{\rm out}\leq n$ and 
$1\leq q_{\rm in}, q_{\rm out}\leq n$.  
A function 
$$\mbox{$f:V(u,i)\to \mathbb{N}_0$
is $((p_{\rm in},q_{\rm in}),(p_{\rm out},q_{\rm out}))$-{\it compatible}}$$ 
if conditions (C1), (C2), (C3), and (C4) hold.
Let 
$$\beta((u,i),(p_{\rm in},q_{\rm in}),(p_{\rm out},q_{\rm out}))$$
be the maximum weight $\sum\limits_{x\in V(u,i)}f(x)$ of a function
$f:V(u,i)\to \mathbb{N}_0$ that is 
$((p_{\rm in},q_{\rm in}),(p_{\rm out},q_{\rm out}))$-compatible.

Our next lemma shows that the broadcast independence number 
can be extracted from these values.

\begin{lemma}\label{lemma3}
$\alpha_b(T)=\beta((r,d_T(r)),(-1,n),(n,1))$.
\end{lemma}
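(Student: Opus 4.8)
The plan is to show that the full-tree quantity $\beta((r,d_T(r)),(-1,n),(n,1))$ equals $\alpha_b(T)$ by verifying that the four compatibility conditions (C1)--(C4), evaluated at the specific parameter choice $((p_{\rm in},q_{\rm in}),(p_{\rm out},q_{\rm out}))=((-1,n),(n,1))$, exactly reduce to the original independent-broadcast conditions (B1) and (B2) on all of $V(T)$. The key point is that $T(r,d_T(r))=T$ itself, so $V(u,i)=V(T)$ and there are no vertices outside the subtree. Hence the ``incoming'' and ``outgoing'' constraints encoded by $g_{(p_{\rm in},q_{\rm in})}$ and $g_{(p_{\rm out},q_{\rm out})}$ must collapse to conditions that are either automatically satisfied or vacuous, leaving (C1) and (C2) as the only real constraints. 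Since (C1) and (C2) are literally (B1) and (B2) restricted to $V(T(u,i))=V(T)$, a compatible $f$ is exactly an independent broadcast on $T$, and the maximum weight is $\alpha_b(T)$.

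First I would handle (C3). With $(p_{\rm in},q_{\rm in})=(-1,n)$, formula (\ref{e0}) gives $g_{(-1,n)}(d)=d-(-1)+n-1=d+n$ for $d\geq 0$ (since $d\geq 0>-1=p_{\rm in}$ always). Because every value $f(x)$ is at most $\mathrm{ecc}_T(x)\leq n$ by (C1), and ${\rm dist}_T(r,x)\geq 0$, the bound $f(x)\leq g_{(-1,n)}({\rm dist}_T(r,x))=n+{\rm dist}_T(r,x)\geq n\geq f(x)$ holds trivially. Thus (C3) imposes no restriction beyond (C1), which is the intended meaning of ``no positive values of $f$ lie outside $V(T(u,i))$'': the incoming bound should be so large as to be vacuous.

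Next I would dispatch (C4) using Lemma~\ref{lemma2}. With $(p_{\rm out},q_{\rm out})=(n,1)$, condition (C4) asks, for each $x$ with $f(x)>0$, that $g_{(n,1)}(d)\leq g_{(f(x),f(x))}(d+{\rm dist}_T(r,x))$ for all positive integers $d$. Applying Lemma~\ref{lemma2} with $p=n$, $q=1$, $f=f(x)$, and ${\rm dist}={\rm dist}_T(r,x)$, this is equivalent to $\max\{f(x)-\max\{n,0\},\,1-n\}\leq {\rm dist}_T(r,x)$, i.e.\ $\max\{f(x)-n,\,1-n\}\leq {\rm dist}_T(r,x)$. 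Since $f(x)\leq n$ gives $f(x)-n\leq 0\leq {\rm dist}_T(r,x)$, and $1-n\leq 0\leq {\rm dist}_T(r,x)$, the inequality always holds. Hence (C4) is also vacuous, matching the intent that the outgoing bound $g_{(n,1)}$ be so restrictive (it forbids positive values only at distance $>n$, of which there are none) that every broadcast trivially satisfies it.

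With (C3) and (C4) both vacuous, a $((-1,n),(n,1))$-compatible function $f:V(T)\to\mathbb{N}_0$ is precisely one satisfying (C1) and (C2) on $V(T)=V(T(r,d_T(r)))$, which are exactly (B1) and (B2). Therefore the maximum weight $\beta((r,d_T(r)),(-1,n),(n,1))$ equals the maximum weight of an independent broadcast on $T$, namely $\alpha_b(T)$. I expect the only delicate point to be confirming the edge cases in the $g$-function arithmetic and the direction of the Lemma~\ref{lemma2} inequality at the boundary values $f(x)=n$ and ${\rm dist}_T(r,x)=0$; these are routine once the parameter substitutions are made, but they must be checked carefully since a sign error would make one of (C3), (C4) nonvacuous and break the identification with (B1)--(B2).
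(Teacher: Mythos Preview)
Your proposal is correct and follows essentially the same route as the paper: observe that $T(r,d_T(r))=T$, verify that the choice $(p_{\rm in},q_{\rm in})=(-1,n)$ makes (C3) vacuous because $g_{(-1,n)}(d)\geq n$ for all $d\geq 0$, and verify via Lemma~\ref{lemma2} that the choice $(p_{\rm out},q_{\rm out})=(n,1)$ makes (C4) vacuous, so that compatibility reduces to (C1) and (C2), i.e.\ to (B1) and (B2). The paper's proof is simply the terse version of what you wrote; your explicit computation of $g_{(-1,n)}(d)=d+n$ and the substitution into Lemma~\ref{lemma2} are exactly the details the paper leaves implicit.
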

\begin{proof}
By definition, $T(r,d_T(r))$ equals $T$.
Since $g_{(-1,n)}(d)\geq n$ for every $d$ in $\mathbb{N}_0$,
condition (C3) is void in view of condition (C1).
Similarly, condition (C4) is void 
by Lemma \ref{lemma2} and condition (C1).
This clearly implies the statement.
\end{proof}
The following lemmas explain how to determine $\beta((u,i),(p_{\rm
  in},q_{\rm in}),(p_{\rm out},q_{\rm out}))$ recursively for all
$O(n^5)$ possible choices for $((u,i),(p_{\rm in},q_{\rm in}),(p_{\rm
  out},q_{\rm out}))$.  The next lemma specifies in particular the
values for leaves.

\begin{lemma}\label{lemma4}
$$\beta((u,0),(p_{\rm in},q_{\rm in}),(p_{\rm out},q_{\rm out}))=
\begin{cases}
0 &\mbox{, if $q_{\rm out}>p_{\rm out}$, and}\\ \min\Big\{ {\rm
  ecc}_T(x),g_{(p_{\rm in},q_{\rm in})}(0),p_{\rm out}\Big\}&\mbox{,
  if $q_{\rm out}\leq p_{\rm out}$.}
\end{cases}
$$
\end{lemma}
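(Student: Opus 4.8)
The plan is to exploit that $T(u,0)$ consists of the single vertex $u$, so that a function $f:V(u,0)\to\mathbb{N}_0$ is nothing but a choice of a value $t:=f(u)\in\mathbb{N}_0$, and the quantity $\beta((u,0),\ldots)$ is simply the largest feasible such $t$. (Here the vertex called $x$ in the statement is $u$ itself, so ${\rm ecc}_T(x)={\rm ecc}_T(u)$.) First I would translate the four compatibility conditions into constraints on $t$. Condition (C2) is vacuous, since $T(u,0)$ has no two distinct vertices. Condition (C1) reads $t\le {\rm ecc}_T(u)$, and, using ${\rm dist}_T(u,u)=0$, condition (C3) reads $t\le g_{(p_{\rm in},q_{\rm in})}(0)$. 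Finally, condition (C4) is relevant only when $t>0$, and, again using ${\rm dist}_T(u,u)=0$, it becomes the requirement that $g_{(p_{\rm out},q_{\rm out})}(d)\le g_{(t,t)}(d)$ for every positive integer $d$. Since $t=0$ satisfies (C1) and (C3) (both right-hand sides being nonnegative) and vacuously satisfies (C4), we always have $\beta\ge 0$; the task therefore reduces to determining the largest positive $t$ compatible with (C1), (C3) and (C4), or to detecting that none exists.

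Next I would analyze condition (C4). Comparing the two step functions for large $d$, where $g_{(p_{\rm out},q_{\rm out})}(d)=d-p_{\rm out}+q_{\rm out}-1$ and $g_{(t,t)}(d)=d-1$, the inequality $g_{(p_{\rm out},q_{\rm out})}(d)\le g_{(t,t)}(d)$ forces $q_{\rm out}\le p_{\rm out}$. Hence, if $q_{\rm out}>p_{\rm out}$, then (C4) fails for every $t>0$, only $t=0$ is feasible, and $\beta=0$; this yields the first case of the statement.

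In the remaining case $q_{\rm out}\le p_{\rm out}$ (so $p_{\rm out}\ge q_{\rm out}\ge 1$), I would show that, for $t>0$, condition (C4) is equivalent to $t\le p_{\rm out}$. For the ``if'' direction I would split the positive integers into the three ranges $d\le t$, $t<d\le p_{\rm out}$, and $d>p_{\rm out}$, and verify $g_{(p_{\rm out},q_{\rm out})}(d)\le g_{(t,t)}(d)$ on each (the last range using $q_{\rm out}\le p_{\rm out}$). For the ``only if'' direction I would note that if $t>p_{\rm out}$, then at $d=p_{\rm out}+1$ we have $g_{(t,t)}(d)=0$ while $g_{(p_{\rm out},q_{\rm out})}(d)=q_{\rm out}\ge 1$, violating (C4). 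Consequently a positive $t$ is feasible exactly when $t\le\min\{{\rm ecc}_T(u),g_{(p_{\rm in},q_{\rm in})}(0),p_{\rm out}\}$, and since this minimum is nonnegative ($p_{\rm out}\ge 1$ and the other two quantities are nonnegative), the largest feasible value of $t$, including $t=0$, equals this minimum, which gives the second case.

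The routine part is the three-range verification of the step-function inequality; the only genuinely substantive point is recognizing that (C4) for the single vertex $u$ collapses to exactly the two comparisons $q_{\rm out}\le p_{\rm out}$ and $t\le p_{\rm out}$. This is precisely the ${\rm dist}=0$ instance of Lemma~\ref{lemma2}; since that lemma is stated only for ${\rm dist}\in[n]$, I would carry out the short direct computation above rather than invoke it, but it serves as a useful sanity check that $\max\{t-\max\{p_{\rm out},0\},q_{\rm out}-p_{\rm out}\}\le 0$ is the correct characterizing condition.
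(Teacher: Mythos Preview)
Your argument is correct and follows the same outline as the paper's proof: reduce to the single vertex $u$, note that (C2) is vacuous, read off (C1) and (C3) as the bounds ${\rm ecc}_T(u)$ and $g_{(p_{\rm in},q_{\rm in})}(0)$, and analyze (C4) with ${\rm dist}_T(u,u)=0$. The paper's proof is a one-liner that invokes Lemma~\ref{lemma2} directly; your proof instead carries out the step-function comparison by hand. This is arguably more careful, since Lemma~\ref{lemma2} is stated with the hypothesis ${\rm dist}\in[n]$, which formally excludes ${\rm dist}=0$ --- a point you explicitly flag. Your direct three-range verification yields exactly the characterization $q_{\rm out}\le p_{\rm out}$ and $t\le p_{\rm out}$ that Lemma~\ref{lemma2} would give (via $\max\{t-\max\{p_{\rm out},0\},q_{\rm out}-p_{\rm out}\}\le 0$, using $p_{\rm out}\ge q_{\rm out}\ge 1$), so the two arguments coincide in substance; yours simply fills in the small gap in the hypotheses rather than relying on the evident extension of Lemma~\ref{lemma2} to ${\rm dist}=0$.
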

\begin{proof}
This follows immediately from (C1), (C3), and (C4) using 
Lemma \ref{lemma2},
$V(u,0)=\{u\}$, 
and ${\rm dist}_T(u,u)=0$.
\end{proof}
The next lemma is the technical key lemma. Recall that $v_1,\dots
,v_k$ denote the children of $u$ in the chosen linear order.
See Figure \ref{fig2}.

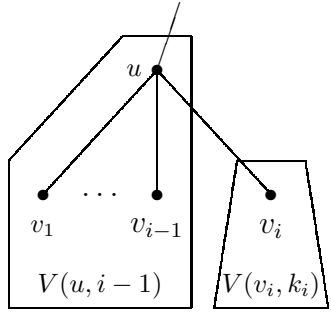
\begin{figure}[H]
\begin{center}
%TeXCAD Picture [2.pic]. Options:
%\grade{\on}
%\emlines{\off}
%\epic{\off}
%\beziermacro{\on}
%\reduce{\on}
%\snapping{\on}
%\pvinsert{% Your \input, \def, etc. here}
%\quality{8.000}
%\graddiff{0.005}
%\snapasp{1}
%\zoom{13.4543}
\unitlength 1.5mm % = 2.845pt
\linethickness{0.4pt}
\ifx\plotpoint\undefined\newsavebox{\plotpoint}\fi % GNUPLOT compatibility
\begin{picture}(28,27)(0,0)
\put(13,21){\circle*{1}}
\put(11,21){\makebox(0,0)[cc]{\footnotesize $u$}}
\put(13,10){\circle*{1}}
\put(3,10){\circle*{1}}
\put(8,10){\makebox(0,0)[cc]{$\ldots$}}
%\emline(3,10)(13,21)
\multiput(3,10)(.0336700337,.037037037){297}{\line(0,1){.037037037}}
%\end
\put(13,21){\line(0,-1){11}}
\put(3,7){\makebox(0,0)[cc]{\footnotesize $v_1$}}
\put(13,7){\makebox(0,0)[cc]{$v_{i-1}$}}
\put(13,21){\line(1,3){2}}
\put(23,10){\circle*{1}}
\put(23,7){\makebox(0,0)[cc]{$v_i$}}
%\emline(23,10)(13,21)
\multiput(23,10)(-.0336700337,.037037037){297}{\line(0,1){.037037037}}
%\end
%\emline(0,13)(10,24)
\multiput(0,13)(.0336700337,.037037037){297}{\line(0,1){.037037037}}
%\end
\put(10,24){\line(1,0){6}}
\put(16,24){\line(0,-1){20}}
\put(16,4){\line(0,-1){4}}
\put(16,0){\line(-1,0){16}}
\put(0,0){\line(0,1){13}}
\put(26,13){\line(-1,0){6}}
%\emline(20,13)(18,0)
\multiput(20,13)(-.03333333,-.21666667){60}{\line(0,-1){.21666667}}
%\end
\put(18,0){\line(1,0){10}}
%\emline(28,0)(26,13)
\multiput(28,0)(-.03333333,.21666667){60}{\line(0,1){.21666667}}
%\end
\put(8,2){\makebox(0,0)[cc]{\footnotesize $V(u,i-1)$}}
\put(23,2){\makebox(0,0)[cc]{\footnotesize $V(v_i,k_i)$}}
\end{picture}
\end{center}
\caption{The situation considered in Lemma \ref{lemma5}.}\label{fig2}
\end{figure}

\begin{lemma}\label{lemma5}
Let $i\in [k]$, and let $v_i$ have $k_i$ children.  Let
$f:V(u,i)\to\mathbb{N}_0$.

If $f$ is $((p_{\rm in},q_{\rm in}),(p_{\rm out},q_{\rm out}))$-compatible,
then there are integers
$$p^{(0)}_{\rm in},p^{(0)}_{\rm out}, 
p^{(1)}_{\rm in}, p^{(1)}_{\rm out}, 
q^{(0)}_{\rm in}, q^{(0)}_{\rm out},
q^{(1)}_{\rm in}, q^{(1)}_{\rm out},$$
with
$-n\leq p^{(0)}_{\rm in}, p^{(0)}_{\rm out},p^{(1)}_{\rm in}, p^{(1)}_{\rm out}\leq n$
and 
$1\leq q^{(0)}_{\rm in}, q^{(0)}_{\rm out},q^{(1)}_{\rm in}, q^{(1)}_{\rm out}\leq n$
such that:
\begin{enumerate}[(i)]
\item The restriction of $f$ to $V(u,i-1)$ is
  $\left(\left(p^{(0)}_{\rm in},q^{(0)}_{\rm
  in}\right),\left(p^{(0)}_{\rm out},q^{(0)}_{\rm
  out}\right)\right)$-compatible.
\item The restriction of $f$ to $V(v_i,k_i)$ is
  $\left(\left(p^{(1)}_{\rm in},q^{(1)}_{\rm
  in}\right),\left(p^{(1)}_{\rm out},q^{(1)}_{\rm
  out}\right)\right)$-compatible.
\item 
$g_{\left(p^{(0)}_{\rm in},q^{(0)}_{\rm in}\right)}(d) =\min\Big\{
  g_{(p_{\rm in},q_{\rm in})}(d),g_{\left(p^{(1)}_{\rm
      out},q^{(1)}_{\rm out}\right)}(d+1)\Big\}$ for every nonnegative
  integer $d$.
\item 
$g_{\left(p^{(1)}_{\rm in},q^{(1)}_{\rm in}\right)}(d) =\min\Big\{
  g_{(p_{\rm in},q_{\rm in})}(d+1),g_{\left(p^{(0)}_{\rm
      out},q^{(0)}_{\rm out}\right)}(d+1)\Big\}$ for every nonnegative
  integer $d$.
\item $g_{(p_{\rm out},q_{\rm out})}(d) \leq \min\Big\{
  g_{\left(p^{(0)}_{\rm out},q^{(0)}_{\rm
      out}\right)}(d),g_{\left(p^{(1)}_{\rm out},q^{(1)}_{\rm
      out}\right)}(d+1)\Big\}$ for every positive integer $d$.
\end{enumerate}
Conversely, if the integers
$$p^{(0)}_{\rm in},p^{(0)}_{\rm out}, 
p^{(1)}_{\rm in}, p^{(1)}_{\rm out}, 
q^{(0)}_{\rm in}, q^{(0)}_{\rm out},
q^{(1)}_{\rm in}, q^{(1)}_{\rm out},$$
with
$-n\leq p^{(0)}_{\rm in}, p^{(0)}_{\rm out},p^{(1)}_{\rm in}, p^{(1)}_{\rm out}\leq n$
and 
$1\leq q^{(0)}_{\rm in}, q^{(0)}_{\rm out},q^{(1)}_{\rm in}, q^{(1)}_{\rm out}\leq n$
are such that conditions (i) to (v) hold, 
then $f$ is $((p_{\rm in},q_{\rm in}),(p_{\rm out},q_{\rm out}))$-compatible.
\end{lemma}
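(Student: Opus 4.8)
The plan is to exploit the fact that $V(u,i)$ is the disjoint union of $V(u,i-1)$ and $V(v_i,k_i)$, joined by the single edge $uv_i$, so that each of the conditions (C1)--(C4) for $f$ on $V(u,i)$ splits into a contribution from each side plus a cross-boundary interaction across $uv_i$. Throughout I will use three elementary distance identities, all consequences of the uniqueness of tree paths and the fact that, for $x\in V(u,i-1)$ and $y\in V(v_i,k_i)$, the $x$--$y$ path must traverse $uv_i$: namely ${\rm dist}_T(v_i,x)={\rm dist}_T(u,x)+1$ for $x\in V(u,i-1)$, ${\rm dist}_T(u,x)={\rm dist}_T(v_i,x)+1$ for $x\in V(v_i,k_i)$, and ${\rm dist}_T(x,y)={\rm dist}_T(u,x)+{\rm dist}_T(v_i,y)+1$ for the mixed pairs. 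I will also use the shift identity $g_{(p,q)}(d+1)=g_{(p-1,q)}(d)$, and the observation that for a positive value $f(y)$ the quantity $g_{(f(y),f(y))}({\rm dist}_T(x,y))$ is exactly the bound that (B2)/(C2) places on $f(x)$, being $0$ when ${\rm dist}_T(x,y)\le f(y)$ and ${\rm dist}_T(x,y)-1$ otherwise.

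For the converse direction (parameters $\Rightarrow$ $f$ compatible) I would argue directly that (C1)--(C4) hold for $f$ on $V(u,i)$. Condition (C1) and the same-side cases of (C2) are inherited verbatim from (i) and (ii). For (C3), a vertex $x\in V(u,i-1)$ is handled by the piece-$0$ (C3) together with the first argument of the minimum in (iii), and a vertex $x\in V(v_i,k_i)$ by the piece-$1$ (C3) together with the first argument of the minimum in (iv), after substituting the relevant distance identity; likewise (C4) for a positive $x$ on either side follows by chaining that piece's (C4) with condition (v), using the one-step shift on the $v_i$-side. The only genuinely new point is the cross-boundary case of (C2): given positive $x\in V(u,i-1)$ and positive $y\in V(v_i,k_i)$, the piece-$0$ (C3) and (iii) give $f(x)\le g_{(p^{(1)}_{\rm out},q^{(1)}_{\rm out})}({\rm dist}_T(v_i,x))$, and feeding $d={\rm dist}_T(v_i,x)$ into the piece-$1$ (C4) for $y$ yields $f(x)\le g_{(f(y),f(y))}({\rm dist}_T(x,y))$ via the mixed distance identity; since $f(x)>0$ this forces both ${\rm dist}_T(x,y)>f(y)$ and ${\rm dist}_T(x,y)>f(x)$, which is exactly (C2).

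For the forward direction ($f$ compatible $\Rightarrow$ parameters) I would first fix the two \emph{outer} pairs. For a side carrying at least one positive value I let $(p^{(j)}_{\rm out},q^{(j)}_{\rm out})$ be the pair produced by Lemma~\ref{lemma1} that realizes the pointwise minimum of $g_{(f(x),f(x))}(\,\cdot\,+{\rm dist})$ over the positive vertices $x$ of that side (its true outer constraint), so the piece's (C4) holds by construction; for a side with no positive value I take a permissive or one-step-shifted copy of $(p_{\rm out},q_{\rm out})$ so that (v) still holds. Condition (v) then reduces to the statement that the true outer constraint of all of $V(u,i)$, which is the minimum of the two sides' constraints after the shift, dominates $g_{(p_{\rm out},q_{\rm out})}$, and this is precisely (C4) for $f$ on $V(u,i)$. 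With the outer pairs fixed, I define the two \emph{inner} pairs by the equalities (iii) and (iv) themselves, each of which is a minimum of two $g$-functions (one shifted by one) and hence, by the shift identity and Lemma~\ref{lemma1}, is realized by a single pair in range. It then remains to check the piece-wise (C3): for $x$ on one side, the first term of the defining minimum gives $f(x)\le g_{(p_{\rm in},q_{\rm in})}(\cdot)$ from the global (C3), while the second term is exactly the bound on $f(x)$ imposed through (C2) by the positive vertices on the \emph{other} side, which is where the mixed distance identity and the interpretation of $g_{(f(y),f(y))}$ are used in the reverse direction to the converse.

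I expect the main obstacle to be the forward direction, and within it the simultaneous choice of the outer pairs: the same pair $(p^{(1)}_{\rm out},q^{(1)}_{\rm out})$ must be permissive enough to keep (iii) from over-constraining the $V(u,i-1)$-side in (C3), yet tight enough to satisfy (v), and reconciling these two demands requires a short case distinction according to whether each side, and $f$ itself, carries a positive value. The degenerate case $f\equiv 0$ is exactly where the strongest admissible $(p_{\rm out},q_{\rm out})$ occur, and there (C3) is vacuous so the shifted choice is free, which is what makes the reconciliation always possible. The remaining work---verifying that every pair obtained through Lemma~\ref{lemma1} and the shift identity actually lands in $[-n,n]\times[1,n]$, with the boundary values $p=\pm n$ treated by hand---is routine bookkeeping.
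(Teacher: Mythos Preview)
Your proposal is correct and follows essentially the same route as the paper's proof: in the forward direction, define the outer pairs via Lemma~\ref{lemma1} as the true outer constraints of each piece, then let (iii) and (iv) \emph{define} the inner pairs (again via Lemma~\ref{lemma1} and the shift identity), and verify (i), (ii), (v); in the converse direction, derive (C1)--(C4) by chaining the piecewise conditions through (iii)--(v), with the cross-boundary case of (C2) handled exactly as you describe. Two minor remarks: your observation that a single inequality $f(x)\le g_{(f(y),f(y))}({\rm dist}_T(x,y))$ with $f(x)>0$ already yields \emph{both} ${\rm dist}_T(x,y)>f(y)$ and ${\rm dist}_T(x,y)>f(x)$ is correct and slightly slicker than the paper, which runs the symmetric chain twice; and your explicit treatment of the degenerate case where one side carries no positive value (choosing a permissive outer pair there) patches a point the paper passes over silently, so it is worth keeping.
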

\begin{proof}
First, let $f:V(u,i)\to\mathbb{N}_0$
be $((p_{\rm in},q_{\rm in}),(p_{\rm
  out},q_{\rm out}))$-compatible.

By Lemma \ref{lemma1}, there is a pair $\left(p^{(0)}_{\rm
  out},q^{(0)}_{\rm out}\right)$ such that the function
$g_{\left(p^{(0)}_{\rm out},q^{(0)}_{\rm out}\right)}$ encodes the
upper bounds on the possible values of $f$ outside of $V(u,i-1)$
that are imposed by the positive values of $f$ in $V(u,i-1)$ as a
function of the distance to $u$.  Similarly, there is a pair
$\left(p^{(1)}_{\rm out},q^{(1)}_{\rm out}\right)$ such that the
function $g_{\left(p^{(1)}_{\rm out},q^{(1)}_{\rm out}\right)}$
encodes the upper bounds on the possible values of $f$ outside of
$V(v_i,k_i)$ that are imposed by the positive values of $f$ in
$V(v_i,k_i)$ as a function of the distance to $v_i$.  Since the
distance of a vertex outside of $V(u,i)$ to $v_i$ is one more than
its distance to $u$, condition (C4) implies (v).

By Lemma \ref{lemma1}, the minimum in (iii) equals
$g_{\left(p^{(0)}_{\rm in},q^{(0)}_{\rm in}\right)}$ for a suitable
choice of $\left(p^{(0)}_{\rm in},q^{(0)}_{\rm in}\right)$, 
where we use $g_{(p,q)}(d+1)=g_{(p-1,q)}(d)$ for all integers $p,d$
with $q\ge 1$. 
Similarly, the minimum in (iv) equals
$g_{\left(p^{(1)}_{\rm in},q^{(1)}_{\rm in}\right)}$ for a suitable
choice of $\left(p^{(1)}_{\rm in},q^{(1)}_{\rm in}\right)$,
that is, (iii) and (iv) hold.

Since $f$ is $((p_{\rm in},q_{\rm in}),(p_{\rm out},q_{\rm
  out}))$-compatible, the value of $f$ assigned to a vertex $x$ in
$V(u,i-1)$ is at most $g_{(p_{\rm in},q_{\rm in})}({\rm
  dist}_T(u,x))$.  Furthermore, by (C2) for $f$, the value of $f$
assigned to any vertex $x$ in $V(u,i-1)$ is at most
$g_{\left(p^{(1)}_{\rm out},q^{(1)}_{\rm out}\right)}({\rm
  dist}_T(v_i,x))= g_{\left(p^{(1)}_{\rm out},q^{(1)}_{\rm
    out}\right)}({\rm dist}_T(u,x)+1)$.  Altogether, (i) holds for
$g_{\left(p^{(0)}_{\rm in},q^{(0)}_{\rm in}\right)}$ as in (iii).  By
a completely symmetric argument, it follows that (ii) holds for
$g_{\left(p^{(1)}_{\rm in},q^{(1)}_{\rm in}\right)}$ as in (iv).  This
completes of the proof of the first part of the statement.

Next, let 
$$p^{(0)}_{\rm in}, 
p^{(0)}_{\rm out}, 
p^{(1)}_{\rm in},
p^{(1)}_{\rm out}, 
q^{(0)}_{\rm in}, 
q^{(0)}_{\rm out},
q^{(1)}_{\rm in},q^{(1)}_{\rm out},$$
with
$-n\leq p^{(0)}_{\rm in}, p^{(0)}_{\rm out},p^{(1)}_{\rm in}, p^{(1)}_{\rm out}\leq n$
and 
$1\leq q^{(0)}_{\rm in}, q^{(0)}_{\rm out},q^{(1)}_{\rm in}, q^{(1)}_{\rm out}\leq n$
be such that conditions (i) to (v) hold.

By (i) and (ii), 
$$\mbox{$f$ satisfies (C1)}.$$
By (i) and (ii), 
the restriction of $f$ to $V(u,i-1)$ satisfies (C2), and
the restriction of $f$ to $V(v_i,k_i)$ satisfies (C2).
Let $x\in V(u,i-1)$ and $y\in V(v_i,k_i)$
be such that $f(x),f(y)>0$.
We obtain
\begin{eqnarray*}
f(x) & \stackrel{(i),(C3)}{\leq} & 
g_{\left(p^{(0)}_{\rm in},q^{(0)}_{\rm in}\right)}({\rm dist}_T(u,x))\\
& \stackrel{(iii)}{\leq} & 
g_{\left(p^{(1)}_{\rm out},q^{(1)}_{\rm out}\right)}({\rm dist}_T(u,x)+1)\\
& \stackrel{(ii),(C4)}{\leq} & 
g_{(f(y),f(y))}({\rm dist}_T(u,x)+1+{\rm dist}_T(v_i,y))\\
& = & 
g_{(f(y),f(y))}({\rm dist}_T(x,y)),
\end{eqnarray*}
which implies ${\rm dist}_T(x,y)>f(y)$, because $f(x)$ is positive.
Symmetrically,
we obtain
\begin{eqnarray*}
f(y) & \stackrel{(ii),(C3)}{\leq} & 
g_{\left(p^{(1)}_{\rm in},q^{(1)}_{\rm in}\right)}({\rm dist}_T(v_i,y))\\
& \stackrel{(iv)}{\leq} & 
g_{\left(p^{(0)}_{\rm out},q^{(0)}_{\rm out}\right)}({\rm dist}_T(v_i,y)+1)\\
& \stackrel{(i),(C4)}{\leq} & 
g_{(f(x),f(x))}({\rm dist}_T(v_i,y)+1+{\rm dist}_T(u,x))\\
& = & 
g_{(f(x),f(x))}({\rm dist}_T(x,y)),
\end{eqnarray*}
which implies ${\rm dist}_T(x,y)>f(x)$, because $f(y)$ is positive.
Altogether, it follows that 
$$\mbox{$f$ satisfies (C2)}.$$
Since, by (iii), 
$g_{\left(p^{(0)}_{\rm in},q^{(0)}_{\rm in}\right)}(d)\leq g_{\left(p_{\rm in},q_{\rm in}\right)}(d)$,
and, by (iv), 
$g_{\left(p^{(1)}_{\rm in},q^{(1)}_{\rm in}\right)}(d)\leq g_{\left(p_{\rm in},q_{\rm in}\right)}(d+1)$
for every nonnegative integer $d$, we have 
$$\mbox{$f$ satisfies (C3)}.$$
If $x\in V(u,i-1)$ is such that $f(x)>0$, then,
\begin{eqnarray*}
g_{(p_{\rm out},q_{\rm out})}(d)
&\stackrel{(v)}{\leq} &
g_{\left(p^{(0)}_{\rm out},q^{(0)}_{\rm out}\right)}(d)\\
&\stackrel{(i),(C4)}{\leq} &
g_{(f(x),f(x))}(d+{\rm dist}_T(u,x))
\mbox{ for every positive integer $d$.}
\end{eqnarray*}
Similarly, 
if $x\in V(v_i,k_i)$ is such that $f(x)>0$, then,
\begin{eqnarray*}
g_{(p_{\rm out},q_{\rm out})}(d)
&\stackrel{(v)}{\leq} &
g_{\left(p^{(1)}_{\rm out},q^{(1)}_{\rm out}\right)}(d+1)\\
&\stackrel{(ii),(C4)}{\leq} &
g_{(f(x),f(x))}(d+1+{\rm dist}_T(v_i,x))\\
&=&
g_{(f(x),f(x))}(d+{\rm dist}_T(u,x))
\mbox{ for every positive integer $d$.}
\end{eqnarray*}
Altogether,
$$\mbox{$f$ satisfies (C4)},$$
which completes the proof.
\end{proof}

The next lemma is a consequence of Lemma \ref{lemma5}.

\begin{lemma}\label{lemma6}
Let $i\in [k]$, and let $v_i$ have $k_i$ children.

$\beta((u,i),(p_{\rm in},q_{\rm in}),(p_{\rm out},q_{\rm out}))$
equals the maximum of
$$\beta\left((u,i-1),\left(p^{(0)}_{\rm in},q^{(0)}_{\rm
  in}\right),\left(p^{(0)}_{\rm out},q^{(0)}_{\rm out}\right)\right)
+\beta\left((v_i,k_i),\left(p^{(1)}_{\rm in},q^{(1)}_{\rm
  in}\right),\left(p^{(1)}_{\rm out},q^{(1)}_{\rm
  out}\right)\right),$$ where the maximum extends over all choices of
$p^{(0)}_{\rm in}$, $p^{(0)}_{\rm out}$, $p^{(1)}_{\rm in}$,
$p^{(1)}_{\rm out}$, $q^{(0)}_{\rm in}$, $q^{(0)}_{\rm out}$,
$q^{(1)}_{\rm in}$, and $q^{(1)}_{\rm out}$, with $-n\leq p^{(0)}_{\rm
  in}, p^{(0)}_{\rm out},p^{(1)}_{\rm in}, p^{(1)}_{\rm out}\leq n$
and $1\leq q^{(0)}_{\rm in}, q^{(0)}_{\rm out},q^{(1)}_{\rm in},
q^{(1)}_{\rm out}\leq n$ that satisfy the conditions (iii), (iv), and
(v) from Lemma \ref{lemma5}.
\end{lemma}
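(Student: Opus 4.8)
The plan is to establish the claimed equality by two matching inequalities, relying entirely on the equivalence proved in Lemma~\ref{lemma5} together with the fact that $V(u,i)$ is the disjoint union of $V(u,i-1)$ and $V(v_i,k_i)$. In particular, the weight of any function $f:V(u,i)\to\mathbb{N}_0$ splits additively as the weight of its restriction to $V(u,i-1)$ plus the weight of its restriction to $V(v_i,k_i)$. I would first note that the set over which the right-hand maximum ranges --- the eight integers in the stated ranges that satisfy (iii), (iv), and (v) --- is finite, and that the all-zero function is always compatible; hence both sides of the identity are attained by genuine optima and the maximum is well defined.

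For the inequality ``$\le$'', I would take a function $f$ attaining $\beta((u,i),(p_{\rm in},q_{\rm in}),(p_{\rm out},q_{\rm out}))$. By the forward direction of Lemma~\ref{lemma5}, there exist eight integers in the prescribed ranges for which (i)--(v) hold for this $f$. In particular (iii), (iv), and (v) hold, so this choice lies in the feasible set of the maximum. Conditions (i) and (ii) say precisely that the restrictions of $f$ to $V(u,i-1)$ and to $V(v_i,k_i)$ are compatible with the corresponding parameters, so their weights are bounded above by $\beta((u,i-1),\dots)$ and $\beta((v_i,k_i),\dots)$, respectively. Summing these and using additivity of the weight shows that $\beta((u,i),\dots)$, which equals the weight of $f$, is at most the summand attached to this feasible choice, and hence at most the maximum.

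For the inequality ``$\ge$'', I would fix an arbitrary feasible choice of the eight integers, that is, one satisfying (iii), (iv), and (v). Let $f_0$ attain $\beta((u,i-1),(p^{(0)}_{\rm in},q^{(0)}_{\rm in}),(p^{(0)}_{\rm out},q^{(0)}_{\rm out}))$ and let $f_1$ attain $\beta((v_i,k_i),(p^{(1)}_{\rm in},q^{(1)}_{\rm in}),(p^{(1)}_{\rm out},q^{(1)}_{\rm out}))$. Since $V(u,i-1)$ and $V(v_i,k_i)$ are disjoint and cover $V(u,i)$, these two functions glue to a single $f:V(u,i)\to\mathbb{N}_0$ whose restrictions are exactly $f_0$ and $f_1$. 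By the choice of $f_0$ and $f_1$, conditions (i) and (ii) hold for $f$ with these parameters, and (iii), (iv), (v) hold by feasibility, so the reverse direction of Lemma~\ref{lemma5} guarantees that $f$ is $((p_{\rm in},q_{\rm in}),(p_{\rm out},q_{\rm out}))$-compatible. Consequently its weight is at most $\beta((u,i),\dots)$; since, by additivity, this weight equals $\beta((u,i-1),\dots)+\beta((v_i,k_i),\dots)$, every summand of the right-hand side is at most $\beta((u,i),\dots)$, and therefore so is the maximum.

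Together the two inequalities yield the asserted equality. I do not anticipate a substantial obstacle, since all the delicate interaction between the inner and outer bounds across the cut at $u$ and at $v_i$ has already been absorbed into Lemma~\ref{lemma5}; the recursion merely records that an optimal broadcast on $T(u,i)$ decomposes into independently optimal broadcasts on $T(u,i-1)$ and $T(v_i,k_i)$ once the coupling between the two pieces is encoded through the parameters by (iii)--(v). The one step deserving care is the gluing in the ``$\ge$'' direction: one must be certain that combining two separately optimal restrictions produces a globally compatible function, and this is exactly what the reverse direction of Lemma~\ref{lemma5} certifies, so no extra argument is required.
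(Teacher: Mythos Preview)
Your proposal is correct and follows exactly the paper's approach: the paper's own proof simply states that the identity follows from Lemma~\ref{lemma5} together with the fact that $V(u,i)$ is the disjoint union of $V(u,i-1)$ and $V(v_i,k_i)$, and you have spelled out the two inequalities that make this precise. The only minor remark is that your well-definedness comment should note non-emptiness of the feasible parameter set (not just finiteness); but this is immediate from the forward direction of Lemma~\ref{lemma5} applied to the all-zero function, which you already invoke.
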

\begin{proof}
This follows from Lemma~\ref{lemma5} and the fact that $V(u,i)$ is
the disjoint union of $V(u,i-1)$ and $V(v_i,k_i)$.
Notice that when considering the values
$\beta\left((u,i-1),\left(p^{(0)}_{\rm in},q^{(0)}_{\rm in}\right),
\left(p^{(0)}_{\rm out},q^{(0)}_{\rm out}\right)\right)$ and 
$\beta\left((v_i,k_i),\left(p^{(1)}_{\rm in},
q^{(1)}_{\rm in}\right),
\left(p^{(1)}_{\rm out},q^{(1)}_{\rm out}\right)\right)$, 
we know, by definition, 
that the corresponding restricted independent broadcasts on $T(u,i-1)$ and $T(v_i,k_i)$ are 
$\left(\left(p^{(0)}_{\rm in},q^{(0)}_{\rm in}\right),
\left(p^{(0)}_{\rm out},q^{(0)}_{\rm out}\right)\right)$-compatible and 
$\left(\left(p^{(1)}_{\rm in},q^{(1)}_{\rm in}\right),
\left(p^{(1)}_{\rm out},q^{(1)}_{\rm out}\right)\right)$-compatible,
respectively.
As the choice of the parameters satisfies the conditions (iii), (iv), and (v) 
from Lemma \ref{lemma5}, 
we have seen in the proof of that lemma 
that the common extension of the two restricted independent broadcasts 
is a restricted independent broadcast on $T(u,i)$.
\end{proof}
We are now ready for the following.

\begin{proof}[Proof of Theorem \ref{theorem2}]
By Lemmas \ref{lemma4} and \ref{lemma6}, 
each of the $O(n^5)$ values
$$\beta((u,i),(p_{\rm in},q_{\rm in}),(p_{\rm out},q_{\rm out}))$$
can be determined in time $O(n^4)$, by 
\begin{itemize}
\item processing the vertices $u$ of $T$ 
in an order of nonincreasing depth within $T$,
\item considering the $O(n)$ possible values for each of the four integers $p^{(0)}_{\rm out}$, 
$p^{(1)}_{\rm out}$, 
$q^{(0)}_{\rm out}$, and
$q^{(1)}_{\rm out}$,
\item checking condition (v) from Lemma \ref{lemma5} in constant time,
\item determining the four integers
$p^{(0)}_{\rm in}$, 
$p^{(1)}_{\rm in}$, 
$q^{(0)}_{\rm in}$, and
$q^{(1)}_{\rm in}$
as in (iii) and (iv) from Lemma \ref{lemma5} in constant time,
and 
\item adding 
$\beta\left((u,i-1),\left(p^{(0)}_{\rm in},q^{(0)}_{\rm in}\right),\left(p^{(0)}_{\rm out},q^{(0)}_{\rm out}\right)\right)$
and 
$\beta\left((v_i,k_i),\left(p^{(1)}_{\rm in},q^{(1)}_{\rm in}\right),\left(p^{(1)}_{\rm out},q^{(1)}_{\rm out}\right)\right)$.
\end{itemize}
Now, Lemma \ref{lemma3} implies the statement.
\end{proof}
It may be possible --- yet tedious --- to extend Theorem \ref{theorem2} 
to graphs of bounded treewidth.

\end{document}